\definecolor{myblue}{RGB}{0,29,119}
\newtheorem{theorem}{Theorem}[section]
\newtheorem{proposition}[theorem]{Proposition}
\theoremstyle{definition}
\newtheorem{definition}[theorem]{Definition}
\newtheorem{example}[theorem]{Example}
\newtheorem{remark}[theorem]{Remark}
\newtheorem*{theorem*}{Theorem}
\DeclareMathOperator{\End}{End}
\DeclareMathOperator{\findim}{fin.\! dim}
\DeclareMathOperator{\pdim}{p\,\!dim}
\DeclareMathOperator{\del}{del}
\newcommand{\cB}{{\mathcal B}}
\newcommand{\cS}{{\mathcal S}}
\providecommand{\AMS}{$\mathcal{A}$\kern-.1667em%
\lower.25em\hbox{$\mathcal{M}$}\kern-.125em$\mathcal{S}$}
\begin{document}

\title[Delooping Level of Nakayama algebras]{Delooping Level of Nakayama algebras}

\author{Emre SEN}

\address{Department of Mathematics, University of Iowa, Iowa City, IA}
\email{emre-sen@uiowa.edu}

\maketitle
\begin{abstract}  We give another proof of the recent result of Ringel, which asserts equality between the finitistic dimension and delooping level of Nakayama algebras. The main tool is syzygy filtration method introduced in \cite{sen2019}. In particular, we give characterization of the finitistic dimension one Nakayama algebras.
\end{abstract}

\tableofcontents

\section{Introduction}
Let $A$ be an artin algebra. The delooping level of finitely generated $A$-module $M$ is defined by Gelinas in \cite{gel} as:
\begin{align}\label{defdel1}
\del M=\min\left\{d\,\vert\,\Omega^d(M)\subset {_A}A\oplus\Omega^{d+1}(M'),\quad M'\subset \text{mod-}A\right\}
\end{align}
and $\del M=\infty$ if such a $d$ does not exist. 
The delooping level of algebra $A$ is the maximum of delooping levels of simple $A$-modules: 
\begin{align}\label{defdel2}
\del A=\max\left\{\del S\,\vert \,S\, \text{is simple}\, A\,\text{module}\right\}
\end{align}

The finitistic dimension of algebra $A$ is given by 
\begin{align*}
\findim A=\sup\left\{\pdim M\,\vert\,M\subset\text{mod-}A\right\}
\end{align*}

An open problem in representation theory of artin algebras is the finitistic dimension conjecture which states that the finitistic dimension of $A$ is finite. Relationship between the finitistic dimension and the delooping level is:
\begin{align*}
\findim A^{op}\leq \del A
\end{align*}
which makes the delooping level an interesting homological measure, where $A^{op}$ is opposite algebra \cite{gel}.

For the rest of the paper, we focus our attention to Nakayama algebras i.e. all indecomposable modules are uniserial.  Ringel proved the following theorem in \cite{rin2020del}:

\begin{theorem}\label{thm}
The finitistic dimensions and the delooping levels of Nakayama algebra and its opposite algebra are same.
\end{theorem}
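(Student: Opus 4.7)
The plan is to prove the two-sided inequality $\findim A \leq \del A \leq \findim A$ for every Nakayama algebra $A$. Gelinas's general bound $\findim A^{op} \leq \del A$ is already recorded in the introduction, and the opposite of a Nakayama algebra is again Nakayama. So once I establish the single inequality $\del A \leq \findim A$ for all Nakayama $A$, applying it also to $A^{op}$ gives
\begin{align*}
\findim A^{op} \leq \del A \leq \findim A, \qquad \findim A \leq \del A^{op} \leq \findim A^{op},
\end{align*}
and all four quantities collapse to a single value, which is exactly Theorem \ref{thm}.

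The task thus reduces to bounding $\del S \leq \findim A$ for every simple $A$-module $S$. I would induct on $\findim A$, using the syzygy filtration method of \cite{sen2019}. The first step is to nail down the base case by characterizing those Nakayama algebras with $\findim A = 1$ and verifying directly that every simple has delooping level at most one. Since both concepts are controlled by a single syzygy in that regime, this is a finite check on the Kupisch series; it also produces the finitistic-dimension-one characterization advertised in the abstract.

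For the inductive step I would invoke the $\epsilon$-construction of \cite{sen2019}, which produces from $A$ a strictly smaller Nakayama algebra $\epsilon A$ whose finitistic dimension is strictly less than that of $A$, together with a filtration expressing the first syzygies of $A$-modules in terms of $\epsilon A$-modules. Given a simple $S$, I would filter $\Omega^{k}(S)$ using this method until it is identified, up to projective summands, with a module coming from $\epsilon A$. Inductively applying the bound to $\epsilon A$ yields a decomposition $\Omega^{d'}(T) \subset P' \oplus \Omega^{d'+1}(M')$ in $\epsilon A$ with $d' \leq \findim \epsilon A$; pulling it back through the filtration (and accounting for the syzygy shift $k$ introduced by the reduction) produces the required decomposition for $S$ in $A$, with total delooping level bounded by $\findim \epsilon A + k \leq \findim A$.

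The principal obstacle is that delooping level is not purely a projective-dimension statistic: it demands that a genuine projective direct summand split off $\Omega^d(S)$, not just that the syzygy have small projective dimension. The $\epsilon$-construction is tailored to control projective dimensions via syzygy filtration, so the technical heart of the argument will be upgrading the filtration's output to honest direct-summand decompositions. Because every $A$-module is uniserial and Nakayama syzygies decompose into uniserial pieces, each filtered factor can be handled individually; the delicate point is verifying that the pieces identified as projective over $\epsilon A$ correspond to projective summands in $A$, and that the residual piece sits inside the next syzygy of some module in a way compatible with the embedding required by \eqref{defdel1}. I expect this splitting analysis, together with the edge cases where uniserials are too short for the filtration to advance cleanly, to absorb most of the detailed work.
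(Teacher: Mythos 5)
Your plan is essentially the paper's: induct on the finitistic dimension using the $\bm\varepsilon$-construction, with a base case at small finitistic dimension. The top-level framing is genuinely different — you propose proving only the single inequality $\del A \leq \findim A$ and then closing the loop with G\'elinas's bound applied to both $A$ and $A^{op}$, whereas the paper proves the equality $\findim\Lambda = \del\Lambda$ directly and invokes the already-known equality $\findim\Lambda = \findim\Lambda^{op}$ from \cite{sen2019}. Your framing is a modest but real simplification in that it needs only one inequality from the inductive machinery.

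That said, several load-bearing details are missing or imprecise. First, the $\bm\varepsilon$-construction reduces the finitistic dimension by \emph{exactly two} (not merely "strictly less"), and the syzygy shift is exactly two as well: it is second (and higher) syzygies, not first syzygies, that acquire the $\cB(\Lambda)$-filtration. You need both of these exact values for your bookkeeping $\findim\bm\varepsilon(A) + k \leq \findim A$ to close; if $k$ were even one more than the drop in $\findim$, the induction would fail. Second, because the drop is by two, iterating lands you on $\findim = 2$ just as often as on $\findim = 1$; you must therefore also handle $\findim A = 2$ as a base case (the paper does both in Proposition \ref{prop2}). Third — and this is the key idea you are missing — the "splitting analysis" you anticipate as the technical heart is actually dispatched cleanly by Remark \ref{remarkfacts}(\ref{listcategory}): the category of $\cB(\Lambda)$-filtered $\Lambda$-modules is equivalent to $\bm\varepsilon(\Lambda)$-mod, and the equivalence sends projectives to projectives and syzygies to syzygies. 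So an inclusion $\Omega^d(S) \subset {}_\Lambda\Lambda \oplus \Omega^{d+1}(M)$, after applying $\Omega^2$ to land in filtered modules, transports verbatim to $\bm\varepsilon(\Lambda)$-mod and back; no separate upgrading from filtration to direct-sum is required. This is the content of Proposition \ref{prop1}, which gives $\del\Lambda = \del\bm\varepsilon(\Lambda) + 2$ on the nose when $\varphi\dim\Lambda \geq 3$.
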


In this short note, we give another proof of the theorem by using syzygy filtration method. The keystone is the following reduction:
\begin{gather*}
\del \Lambda=\del \bm{\varepsilon}(\Lambda)+2
\end{gather*}
which we show in proposition \ref{prop1}, where ${\bm{\varepsilon}}(\Lambda)$ is syzygy filtered algebra (see def. \ref{filteredalg}). This helps us to make mathematical induction on the homological dimensions, so it is enough to study Nakayama algebras with small finitistic dimensions (see prop. \ref{findim1}). In the next section, we recall some definitions  and results about syzygy filtrations. In the last section, we state a characterization of Nakayama algebras having small finitistic dimensions and the proof of the main theorem.

\subsection{Acknowledgments}
We are deeply thankful to Prof. Ringel for devoting appendices B and C in his work to our $\bm\varepsilon$-construction and elucidating its difference from the other methods as well as to Prof. Igusa and Prof. Todorov for encouraging us to post this paper.

\section{Preliminaries on Syzygy Filtration}\label{sectionprelim}
Here, we recall some basic definitions and construction of syzygy filtered algebra ${\bm\varepsilon}(\Lambda)$ which will be used in the section \ref{sectionproof}. For details we refer to papers \cite{sen2018} and \cite{sen2019}.
\subsection{Nakayama Algebras via Defining Relations}
Consider the irredundant system of relations $\alpha_{k_{2i}}\ldots\alpha_{k_{2i-1}}=0$ where $1\leq i\leq  r$ and $k_{f}\in\left\{1,2,\ldots,n\right\}$ for a cyclic oriented quiver $Q$ where each arrow $\alpha_i$, $1\leq i\leq n-1$ starts at the vertex $i$ and ends at the vertex $i+1$ and $\alpha_n$ starts at vertex $n$ and ends at vertex $1$. It follows that bound quiver algebra $\Lambda=kQ/I$ where $I$ is the irredundant system of relations, is cyclic Nakayama algebra. Explicitly $I$ is generated by relations:
\begin{gather}\label{relations}
\alpha_{k_2}\ldots\alpha_{k_1+1}\alpha_{k_1}\ \ =0 \\
\alpha_{k_4}\ldots\alpha_{k_3+1}\alpha_{k_3}\ \ =0\nonumber \\
\vdots \nonumber\\
\alpha_{k_{2r-2}}\ldots\alpha_{k_{2r-3}+1}\alpha_{k_{2r-3}}=0\nonumber\\
\alpha_{k_{2r}}\ldots\alpha_{k_{2r-1}+1}\alpha_{k_{2r-1}}=0\nonumber
\end{gather}
where $..<k_1<k_3<\ldots<k_{2r-1}<k_1<\ldots$ is cyclically ordered \cite{sen2018}. 
\subsection{Projective and Injective Modules}\label{projectives} We can describe indecomposable projective-injective modules by relations. They are:
\begin{align}
P_{k_1+1}=I_{k_4},\quad P_{k_3+1}=I_{k_6},\quad \dots \quad P_{k_{2r-3}+1}=I_{k_{2r}},\qquad P_{k_{2r-1}+1}=I_{k_{2}}
\end{align}
Projective modules can be characterized by their socles:

\begin{align*}P_{k_{1}}\hookrightarrow\ldots\hookrightarrow  P_{(k_{2r-1})+1}=I_{k_{2}} \quad\text{ have simple } S_{k_{2}} \text{  as their socle}\\
P_{k_3}\hookrightarrow\ldots\hookrightarrow P_{k_1+1}=I_{k_4}  \quad\text{ have simple } S_{k_4} \text{  as their socle}\\
P_{k_5} \hookrightarrow\ldots\hookrightarrow  P_{k_3+1}=I_{k_6}\quad \text{ have simple } S_{k_6} \text{  as their socle}\\
\vdots\qquad\qquad\qquad\qquad\qquad\qquad\qquad\\
P_{k_{2r-1}}\hookrightarrow\ldots\hookrightarrow P_{(k_{2r-3})+1}=I_{k_{2r}}\quad  \text{ have simple } S_{k_{2r}} \text{  as their socle}
\end{align*}
Notice that the minimal length projectives in their classes are:
\begin{align}\label{minproj}
P_{k_1},P_{k_3},\ldots,P_{k_{2r-1}}.
\end{align}

Analogously, injective modules can be characterized by their tops:
\begin{align*}
 P_{(k_{2r-1})+1}=I_{k_{2}}\twoheadrightarrow\ldots\twoheadrightarrow I_{k_{2r}+1}  \quad\text{ have simple } S_{k_{2r-1}+1}\text{  as their top}\\
 P_{k_1+1}=I_{k_4}\twoheadrightarrow\ldots\twoheadrightarrow I_{k_2+1}  \quad\quad\text{ have simple } S_{k_1+1} \text{  as their top}\\
  P_{k_3+1}=I_{k_6}\twoheadrightarrow\ldots\twoheadrightarrow I_{k_4+1}  \quad\quad\text{ have simple } S_{k_3+1} \text{  as their top}\\
\vdots\qquad\qquad\qquad\qquad\quad\quad\quad\quad\quad\quad\\
P_{(k_{2r-3})+1}=I_{k_{2r}} \twoheadrightarrow\ldots\twoheadrightarrow I_{k_4+1}  \quad\quad\text{ have simple } S_{k_{2r-3}+1} \text{  as their top}
\end{align*}

\subsection{Syzygy Filtered Algebra}\cite{sen2019}
Let $\cS(\Lambda)$ be the complete set of representatives of socles of projective modules over $\Lambda$:
\begin{align}\label{defS}
\cS(\Lambda)=\left\{S_{k_2}, S_{k_4},\ldots,S_{k_{2r}}\right\}
\end{align}
 Similarly, let $\cS'(\Lambda)$ be the complete set of representatives of simple modules such that they are indexed by one cyclically larger indices of $\cS(\Lambda)$:
\begin{align}\label{defSprime}
\cS'(\Lambda)=\left\{S_{k_{2}+1}, S_{k_4+1},\ldots,S_{k_{2r}+1}\right\}
\end{align}

Now we define the following base set $\cB(\Lambda)$ and its $\Lambda^{op}$ analogue $\nabla(\Lambda)$:
\begin{align}\label{baseset}
\cB(\Lambda):=\left\{ \Delta_1\cong\begin{vmatrix}
    S_{k_{2r}+1} \\
    \vdots  \\
    S_{k_{2}}
\end{vmatrix}\!, \Delta_2\cong\begin{vmatrix}
    S_{k_{2}+1}  \\
    \vdots  \\
   S_{k_{4}}
\end{vmatrix}\!,..,\Delta_j\cong\begin{vmatrix}
   S_{k_{2(j-1)}+1}  \\
    \vdots  \\
    S_{k_{2j}}
\end{vmatrix}\!,..,\Delta_r\cong \begin{vmatrix}
   S_{k_{2r-2}+1}  \\
    \vdots  \\
    S_{k_{2r}}
\end{vmatrix}\!\right\}
\end{align}

\begin{align}\label{nablaset}
{\nabla}\left(\Lambda\right):=\left\{ \nabla_1\cong\begin{vmatrix}
    S_{k_{1}+1} \\
    \vdots  \\
    S_{k_{3}}
\end{vmatrix}\!, \nabla_2\cong\begin{vmatrix}
    S_{k_{3}+1}  \\
    \vdots  \\
   S_{k_{5}}
\end{vmatrix}\!,..,\nabla_j\cong\begin{vmatrix}
   S_{k_{2j-1}+1}  \\
    \vdots  \\
    S_{k_{2j+1}}
\end{vmatrix}\!,..,\nabla_r\cong \begin{vmatrix}
   S_{k_{2r-1}+1}  \\
    \vdots  \\
    S_{k_{1}}
\end{vmatrix}\!\right\}.
\end{align}
\begin{remark} In the terminology of Ringel \cite{rin2020del} Appendix C, $\cB(\Lambda)$ modules are the first syzygy modules of valley modules. Also he shows how some of these can be interpreted by other methods.
\end{remark}

\begin{definition}\label{filteredalg} \cite{sen2019}
Let $\Lambda$ be cyclic Nakayama algebra. Syzygy filtered algebra $\bm{\varepsilon}(\Lambda)$ is:
\begin{align}
\bm{\varepsilon}(\Lambda):=\End_{\Lambda}\left(\bigoplus\limits_{S\in \cS'(\Lambda)}P(S)\right)
\end{align} 
$d$th syzygy filtered algebra $\bm{\varepsilon}^d(\Lambda)$ is:
\begin{align}
\bm{\varepsilon}^d(\Lambda):=\End_{\bm{\varepsilon}^{d-1}(\Lambda)}\left(\bigoplus\limits_{S\in\cS'(\bm{\varepsilon}^{d-1}(\Lambda))}P(S)\right)
\end{align}
provided that $\bm{\varepsilon}^{d-1}(\Lambda)$ is cyclic non self-injective Nakayama algebra.
\end{definition}

For cyclic Nakayama algebras of infinite global dimension, a nice homological measure is $\varphi$-dimension which attains only even numbers (see \cite{sen2018} thm A). Here we briefly recall definition of $\varphi$-dimension:
\begin{definition}\label{varphi}
For a given $A$-module $M$, let $\varphi\left(M\right)$ be defined in \cite{it} as:
$$\varphi(M):=min\{t\ |\ rank\left(L^t\langle add M\rangle\right)=rank\left(L^{t+j}\langle add M\rangle\right)\text{ for }\forall j\geq 1\}$$ where $L[M]:=[\Omega M]$ in $K_0$ group.  
$\varphi$-dimension of algebra $A$ is:
$$\varphi\dim(A):=sup\{\varphi(M)\ |\ M \in mod A\}.$$ 
\end{definition}

\begin{remark}\label{remarkfacts} We collect and summarize some useful results about $\varphi$-dimension and syzygy filtration from \cite{sen2018} and \cite{sen2019}.
\begin{enumerate}[label=\arabic*.]
\item\label{listfiltration}  The second and higher syzygies of $\Lambda$ modules have a unique $\cB(\Lambda)$ filtration.
\item\label{listcategory}  Category of $\cB(\Lambda)$ filtered $\Lambda$-modules is equivalent to category of ${\bm\varepsilon}(\Lambda)$-modules.
\item\label{listnakayama} ${\bm\varepsilon}(\Lambda)$ is Nakayama algebra.
\item\label{listreduction} $\bm\varepsilon$-construction reduces the following homological dimensions by exactly two: $\varphi$-dimension, finitistic, dominant and Gorenstein dimensions.
\item\label{listreduction2} If global dimension of $\Lambda$ is infinite then there exists $d$ such that ${\bm\varepsilon}^d(\Lambda)$ is selfinjective Nakayama algebra and $\varphi\dim{\bm\varepsilon}^{d-1}(\Lambda)=2$. 
\item\label{listdifference} The difference $\varphi\dim\Lambda-\findim\Lambda\leq 1$. In particular, $\findim\Lambda=1$ or $\findim\Lambda=2$ imply $\varphi\dim\Lambda=2$.

\end{enumerate}
 \end{remark}

\section{Proof of the Main Theorem} \label{sectionproof}
In this section, first we prove the reduction formula. Then we give characterization of the finitistic dimension one Nakayama algebras in terms of defining relations. Together with study of periodic modules, we show equalities of the delooping level and the finitistic dimension while $\varphi$-dimension is two. At the end, we give proof of the main theorem.

\begin{proposition}\label{prop1} Let $\Lambda$ be cyclic Nakayama algebra with $\varphi\dim\Lambda\geq 3$. Then:
\begin{gather}
\del \Lambda=\del \bm{\varepsilon}(\Lambda)+2
\end{gather}
\end{proposition}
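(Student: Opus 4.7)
The plan is to use the equivalence $F$ between $\cB(\Lambda)$-filtered $\Lambda$-modules and $\bm\varepsilon(\Lambda)$-modules from Remark \ref{remarkfacts} item \ref{listcategory} as a dictionary that translates delooping data for $\Lambda$ into delooping data for $\bm\varepsilon(\Lambda)$, shifted by exactly two. The preliminary observation I would nail down is that under $F$, two applications of the $\Lambda$-syzygy correspond to one application of the $\bm\varepsilon(\Lambda)$-syzygy: for any $\cB(\Lambda)$-filtered module $X$,
\begin{align*}
F\bigl(\Omega^2_\Lambda X\bigr) \;\cong\; \Omega_{\bm\varepsilon(\Lambda)} F(X)
\end{align*}
up to projective summands. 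This compatibility is implicit in the construction of \cite{sen2019} and is the mechanism behind the dimension reduction by exactly two (fact \ref{listreduction}); it also identifies the summand $\bigoplus_{S' \in \cS'(\Lambda)} P(S')$ of ${_\Lambda\Lambda}$ as the preimage under $F^{-1}$ of the regular $\bm\varepsilon(\Lambda)$-module.

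For the inequality $\del \bm\varepsilon(\Lambda) \leq \del \Lambda - 2$, I pick a $\Lambda$-simple $S$ with $\del_\Lambda S = d$. The assumption $\varphi\dim\Lambda \geq 3$ together with fact \ref{listdifference} forces $\findim \Lambda \geq 2$, so $d \geq 2$. The defining inclusion
\begin{align*}
\Omega^d_\Lambda(S) \;\hookrightarrow\; {_\Lambda\Lambda} \oplus \Omega^{d+1}_\Lambda(M')
\end{align*}
has $\cB(\Lambda)$-filtered outer terms by fact \ref{listfiltration}, and one argues that the image of $\Omega^d_\Lambda(S)$ in the projective summand lies inside $\bigoplus_{S' \in \cS'(\Lambda)} P(S')$, since the tops of the remaining indecomposable projectives of $\Lambda$ are not tops of any $\Delta_j \in \cB(\Lambda)$ and hence receive no nonzero map from a $\cB(\Lambda)$-filtered module. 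Applying $F$ and using the preliminary observation converts the inclusion into
\begin{align*}
\Omega^{d-2}_{\bm\varepsilon(\Lambda)} F(\Omega^2_\Lambda S) \;\hookrightarrow\; {_{\bm\varepsilon(\Lambda)}\bm\varepsilon(\Lambda)} \oplus \Omega^{d-1}_{\bm\varepsilon(\Lambda)} F(\Omega^2_\Lambda M'),
\end{align*}
so $\del_{\bm\varepsilon(\Lambda)} F(\Omega^2_\Lambda S) \leq d-2$. Since every simple $\bm\varepsilon(\Lambda)$-module occurs as a composition factor of $F(\Omega^2_\Lambda S)$ for a suitable $\Lambda$-simple $S$, the standard behaviour of $\del$ under short exact sequences from \cite{gel} yields $\del \bm\varepsilon(\Lambda) \leq d-2$.

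The reverse inequality $\del\Lambda \leq \del\bm\varepsilon(\Lambda)+2$ runs in the opposite direction: for each $\Lambda$-simple $S$, the module $F(\Omega^2_\Lambda S)$ is a $\bm\varepsilon(\Lambda)$-module whose delooping level is bounded by $d' = \del\bm\varepsilon(\Lambda)$ (again via composition factors). Pulling back a delooping witness for $F(\Omega^2_\Lambda S)$ through $F^{-1}$ and the preliminary observation produces an inclusion $\Omega^{d'+2}_\Lambda(S) \hookrightarrow \bigl(\bigoplus_{S' \in \cS'(\Lambda)} P(S')\bigr) \oplus \Omega^{d'+3}_\Lambda(N') \subset {_\Lambda\Lambda} \oplus \Omega^{d'+3}_\Lambda(N')$, so $\del_\Lambda S \leq d'+2$; taking the maximum over $S$ completes the argument.

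The main obstacle I foresee is the bookkeeping at the interface of the two module categories: ${_\Lambda\Lambda}$ is not itself $\cB(\Lambda)$-filtered, and one must justify that $\cB(\Lambda)$-filtered submodules of ${_\Lambda\Lambda}$ cannot escape the distinguished summand $\bigoplus_{S' \in \cS'(\Lambda)} P(S')$, so that no generality is lost when passing through $F$. Verifying this, together with the syzygy compatibility above, requires using the explicit form of $\cB(\Lambda)$ in \eqref{baseset} and a careful inspection of the tops of the indecomposable projectives of $\Lambda$.
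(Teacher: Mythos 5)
Your proposal follows the same essential strategy as the paper's proof: apply $\Omega^2$ to the delooping witness of a simple $\Lambda$-module, use the equivalence $F$ between $\cB(\Lambda)$-filtered $\Lambda$-modules and $\bm\varepsilon(\Lambda)$-modules (together with the $F(\Omega_\Lambda^2 X)\cong\Omega_{\bm\varepsilon(\Lambda)}F(X)$ compatibility that underlies fact \ref{remarkfacts}(\ref{listreduction})), and land two steps lower in $\bm\varepsilon(\Lambda)$-mod. Your extra care about the projective summand --- that a $\cB(\Lambda)$-filtered direct summand of ${_\Lambda\Lambda}$ must land in $\bigoplus_{S'\in\cS'(\Lambda)}P(S')$ because the tops of $\cB(\Lambda)$-filtered modules lie in $\cS'(\Lambda)$ --- is a point the paper leaves implicit, and making it explicit (with the definition of $\del$ read as ``direct summand of'', as in \cite{gel}) is a genuine improvement. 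Likewise, writing out both inequalities is clearer than the paper's compressed ``$d-2$ is minimal'' remark.

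However, there is one step that does not hold as stated. You write that, since every simple $\bm\varepsilon(\Lambda)$-module occurs as a composition factor of some $F(\Omega_\Lambda^2 S)$, ``the standard behaviour of $\del$ under short exact sequences'' yields $\del\bm\varepsilon(\Lambda)\leq d-2$. Delooping level does not descend to composition factors: a short exact sequence $0\to A\to B\to C\to 0$ bounds $\del B$ in terms of $\del A$ and $\del C$ (via the horseshoe lemma), but not the other way around. So from $\del F(\Omega_\Lambda^2 S)\leq d-2$ you cannot conclude anything about $\del$ of its composition factors. The correct route, and the one the paper implicitly relies on, is that the indecomposables of $\cB(\Lambda)$ are themselves (first) syzygies of specific $\Lambda$-modules (Ringel's ``valley modules'', as recorded in Remark \ref{remarkfacts}); equivalently, each simple $\bm\varepsilon(\Lambda)$-module is $F(\Omega_\Lambda^2 S)$ on the nose for an appropriate $\Lambda$-simple $S$, not merely a composition factor of one. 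With that replacement your argument goes through and matches the paper's intent; as written, the composition-factor step is a genuine gap.

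A smaller remark: you deduce $d\geq 2$ from $\varphi\dim\Lambda\geq 3$ via $\findim\Lambda\geq 2$, but $\findim\Lambda^{op}\leq\del\Lambda$ involves the opposite algebra, so one should also invoke the left-right symmetry of $\findim$ for Nakayama algebras (established in \cite{sen2019}) to close this loop.
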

\begin{proof}
Let $S$ be the simple $\Lambda$ module with $\del S=d\geq 3$. By definition \ref{defdel1}, there exists $\Lambda$ module $M$ satisfying $\Omega^d(S)\subset {}_{\Lambda}\Lambda\oplus \Omega^{d+1}(M)$, provided that $d$ is minimal.

This implies:
\begin{gather*}
\Omega^{d-2}(\Omega^2(S))\subset {}_{\Lambda}\Lambda\oplus \Omega^{d-1}(\Omega^2(M))\\
\Omega^{d-2}(S')\subset {}_{ \bm{\varepsilon}(\Lambda)} \bm{\varepsilon}(\Lambda)\oplus \Omega^{d-1}(M'))
\end{gather*}
where $S'=\Omega^2(M)$, $M'=\Omega^2(M)$ are $ \bm{\varepsilon}(\Lambda)$  modules. $d-2$ is minimal otherwise $d$ would not be delooping level of simple $\Lambda$-module $S$.
\end{proof}

This observation enables us to make mathematical induction on the delooping level of $\Lambda$. So, it is enough to analyze the cases where $\del\Lambda$ is one or two. First we examine the case with the finitistic dimension one.

\begin{proposition}\label{findim1}
The following are equivalent for a cyclic Nakayama algebra $\Lambda$ given by irredundant system of relations [\ref{relations}]:
\begin{enumerate}[label=\arabic*.]
\item\label{item1} $\findim\Lambda=1$
\item\label{item2} $\left\{k_1,k_3,\ldots,k_{2r-1}\right\}=\left\{k_2,k_4,\ldots,k_{2r}\right\}$
\item\label{item3} $\cB(\Lambda)=\nabla(\Lambda)$
\end{enumerate}
\end{proposition}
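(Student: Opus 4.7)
My plan is to establish the cycle of implications $(2) \Leftrightarrow (3) \Rightarrow (1) \Rightarrow (2)$.

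For $(2) \Leftrightarrow (3)$: each $\Delta_j \in \cB(\Lambda)$ is the uniserial module corresponding to the arc $[k_{2j-2}+1, k_{2j}]$ on the cyclic quiver of $\Lambda$, and these $r$ arcs partition the vertex set $\{1,\ldots,n\}$. Similarly each $\nabla_j \in \nabla(\Lambda)$ corresponds to the arc $[k_{2j-1}+1, k_{2j+1}]$, giving a second partition of $\{1,\ldots,n\}$. Two such arc partitions coincide iff their right-endpoint sets coincide, i.e., iff $\{k_{2j}\}_{j=1}^{r} = \{k_{2j-1}\}_{j=1}^{r}$, which is (2).

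For $(2) \Rightarrow (1)$: using the block structure recalled in Section~\ref{sectionprelim}, if $l \notin \{k_{2m-1}\}_{m}$ then $l+1$ lies in the same block as $l$, so $\operatorname{rad}(P_l) \cong P_{l+1}$ and $\pdim(S_l) \leq 1$. For $l = k_{2j-1}$ with $P_{k_{2j-1}}$ non-simple, a direct two-step syzygy computation yields $\Omega^2 S_{k_{2j-1}} \cong \Delta_{j+1} \in \cB(\Lambda)$. The key claim is that under (2), every non-projective $\Delta_{j'}$ has $\pdim_\Lambda \Delta_{j'} = \infty$: I use the equivalence in remark~\ref{remarkfacts}(\ref{listcategory}), noting that (2)/(3) forces $\bm\varepsilon(\Lambda)$ to be a selfinjective Nakayama algebra (its projective covers from $\cS'(\Lambda)$ coincide with its injective hulls via the identification $\cB = \nabla$). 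Over a selfinjective algebra every non-projective simple has infinite projective dimension, and the equivalence transfers this back to $\Lambda$. Hence $\pdim(S_{k_{2j-1}}) = \infty$, and combining with the first case (and the standard fact that $\findim$ of a cyclic Nakayama algebra is attained on simple modules) yields $\findim(\Lambda) = 1$.

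For $(1) \Rightarrow (2)$: contrapositive. If (2) fails, then some $k_{2j_0} \notin \{k_{2m-1}\}_{m}$; the resulting strict inequalities $k_{2j_0-1} < k_{2j_0} < k_{2j_0+1}$ place $k_{2j_0}+1$ strictly in the interior of block $j_0+1$, so $P_{k_{2j_0}+1}$ has socle $S_{k_{2j_0+2}}$ and length $k_{2j_0+2} - k_{2j_0}$. A direct comparison of tops, socles, and lengths yields $\Omega^2 S_{k_{2j_0-1}} \cong P_{k_{2j_0}+1}$, which is projective; so $\pdim(S_{k_{2j_0-1}}) = 2$ and $\findim(\Lambda) \geq 2$.

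The main technical obstacle is the selfinjectivity assertion in the second paragraph: showing that under (2)/(3), $\bm\varepsilon(\Lambda)$ is selfinjective Nakayama. This reduces to constructing an explicit Nakayama permutation on the simple $\bm\varepsilon$-modules by matching tops of $\bm\varepsilon$-projectives to socles via the correspondence $\cB \leftrightarrow \nabla$, and verifying the consistency of the length data of the $\bm\varepsilon$-projectives under the cyclic shift relating $\{k_{2j-1}\}$ and $\{k_{2j}\}$.
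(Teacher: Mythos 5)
Your proposal is correct in spirit but takes a genuinely different route from the paper, and the route you chose leaves the hardest step only sketched.

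For $(2)\Leftrightarrow(3)$ your arc-partition argument is essentially what the paper does (it says it ``simply follows from definitions''); this part is fine. For $(1)\Rightarrow(2)$ you argue by contrapositive via an explicit computation $\Omega^2 S_{k_{2j_0-1}}\cong P_{k_{2j_0}+1}$, whereas the paper instead observes (using $\varphi\dim\Lambda=2$, hence $\findim\Lambda\leq 2$, from remark \ref{remarkfacts}(\ref{listdifference})) that a simple with $\pdim=1$ can never be the socle of a projective, forcing the set of socles $\{k_{2j}\}$ to lie inside the set of tops of minimal projectives $\{k_{2j-1}\}$; equal cardinality then gives (2). Both approaches work; yours needs the cyclic strict inequalities spelled out carefully (you must justify $k_{2j_0}<k_{2j_0+1}$ from the failure of (2) and irredundancy), but the computation is legitimate.

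The real gap is in $(2)/(3)\Rightarrow(1)$. Your argument reduces to showing that under (2), $\bm\varepsilon(\Lambda)$ is selfinjective, and you explicitly flag this as ``the main technical obstacle'' and only sketch a plan for it. That is not a minor verification: proving selfinjectivity of $\End_\Lambda\bigl(\bigoplus_{l}P_{k_{2l-1}+1}\bigr)$ directly from the combinatorics of (2) is comparable in difficulty to the proposition itself, and you cannot invoke remark \ref{remarkfacts}(\ref{listreduction2}) without already knowing $\varphi\dim\Lambda=2$, which is downstream of $\findim\Lambda=1$ --- precisely what you are trying to prove. The paper avoids this entirely with a much leaner argument: if some $M$ had $\pdim M\geq 2$, then $\Omega^{\pdim M}(M)$ would be a projective module which, being a second-or-higher syzygy, carries a $\cB(\Lambda)$-filtration (remark \ref{remarkfacts}(\ref{listfiltration})); under $\cB(\Lambda)=\nabla(\Lambda)$ the only projectives admitting a $\cB(\Lambda)$-filtration are the projective-injectives, and a projective-injective indecomposable over a Nakayama algebra can never be a proper syzygy by length. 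No selfinjectivity of $\bm\varepsilon(\Lambda)$ is needed. There is also a secondary loose end in your write-up: you conclude $\pdim(S_{k_{2j-1}})=\infty$ from ``every non-projective $\Delta_{j'}$ has infinite projective dimension'' without ruling out that $\Delta_{j+1}=\Omega^2 S_{k_{2j-1}}$ is itself projective (in which case $\pdim S_{k_{2j-1}}=2$, contradicting what you want). To make your route airtight you would need to show that under (2) no $\Delta_{j'}$ is $\Lambda$-projective --- which again leads you back to exactly the $\cB$-filtered-projective $=$ projective-injective observation the paper uses directly, so you may as well adopt the paper's shortcut.
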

\begin{proof}
 
$1\implies 2$: Before starting the proof, by the result \ref{remarkfacts}(\ref{listdifference}), we can assume that $\varphi\dim\Lambda=2$ which bounds the finitistic dimension by $2$. Let $S$ be a simple module with $\pdim S=1$. Because the finitistic dimension of $\Lambda$ is one, $S$ cannot be a socle of projective module. Otherwise we would get the resolution:
\begin{center}
$\xymatrixcolsep{6pt}\xymatrixrowsep{6pt}
\xymatrix{&P\ar[rr]\ar[rd]&&P'\ar[rr]&&\faktor{P'}{S}\\
rad P\ar[ru]&&S\ar[ru]&&
}$
\end{center} and $\pdim\faktor{P'}{S} =2$ would make $\findim\Lambda=2$. Therefore the only possible choices for the socle of projective modules are the tops of minimal length projective modules (\ref{minproj}). By the system of relations, their indices form the set $\left\{k_1,k_3,\ldots,k_{2r-1}\right\}$. On the other hand, socles of projective modules are $S_{k_2}, S_{k_4},\ldots, S_{k_{2r}}$ (see def.\ref{defS}) and indices are $\left\{k_2,k_4,\ldots,k_{2r}\right\}$. Two sets have to be equal.

$2\implies 3$. Equality of sets $\left\{k_1,k_3,\ldots,k_{2r-1}\right\}=\left\{k_2,k_4,\ldots,k_{2r}\right\}$ implies equality of $\cB(\Lambda)=\nabla(\Lambda)$ which simply follows from definitions \ref{baseset} and \ref{nablaset}.

$3\implies 1$ Equality of $\cB(\Lambda)=\nabla(\Lambda)$ implies that all projective-injective modules have $\cB(\Lambda)$-filtration because their tops are from the set $S'(\Lambda)$ (\ref{defSprime}). Indeed, those are the only projective modules having $\cB(\Lambda)$ filtration due to the cyclic ordering of indices. Assume to the contrary that there is a module $M$ with $\pdim M\geq 2$. Therefore $\Omega^{\pdim M}(M)$ is a projective module. Since it is either the second syzygy or higher syzygy, it has $\cB(\Lambda)$ filtration (see remark \ref{remarkfacts} \ref{listfiltration}). Therefore it has to be projective-injective module, which is impossible by length considerations i.e. projective-injective module cannot be a syzygy module.
\end{proof}
\begin{remark} This also implies that $\findim\Lambda=1\iff\findim\Lambda^{op}=1$ which simply follows by the item \ref{item3} of proposition \ref{findim1} and $\findim\Lambda=2\iff\findim\Lambda^{op}=2$ combined by remark \ref{remarkfacts}(\ref{listdifference}). Importance and usage of defining relations is also stressed by Ringel in \cite{rin2020del}.
\end{remark}
\begin{example} Just to show how we interpret item (\ref{item2}) of proposition \ref{findim1}, let  $1,3,5$ be indices of minimal projective modules of algebra of rank $5$. We list some cases:
\begin{enumerate}[label=\roman*)]
\item $P_1=\begin{matrix}
1\\2\\3
\end{matrix}\quad P_2=\begin{matrix}
2\\3\\4\\5
\end{matrix}$
$\quad P_3=\begin{matrix}
3\\4\\5
\end{matrix}$
$\quad P_4=\begin{matrix}
4\\5\\1
\end{matrix}$
$\quad P_5=\begin{matrix}
5\\1
\end{matrix}$\vspace{0.5cm}
\item $P_1=\begin{matrix}
1\\2\\3\\4\\5
\end{matrix}\quad P_2=\begin{matrix}
2\\3\\4\\5\\1
\end{matrix}$
$\quad P_3=\begin{matrix}
3\\4\\5\\1
\end{matrix}$
$\quad P_4=\begin{matrix}
4\\5\\1\\2\\3
\end{matrix}$
$\quad P_5=\begin{matrix}
5\\1\\2\\3
\end{matrix}$\vspace{0.5cm}
\item $P_1=\begin{matrix}
1\\2\\3\\4\\5\\1
\end{matrix}\quad P_2=\begin{matrix}
2\\3\\4\\5\\1\\2\\3
\end{matrix}$
$\quad P_3=\begin{matrix}
3\\4\\5\\1\\2\\3
\end{matrix}$
$\quad P_4=\begin{matrix}
4\\5\\1\\2\\3\\4\\5
\end{matrix}$
$\quad P_5=\begin{matrix}
5\\1\\2\\3\\4\\5
\end{matrix}$
\end{enumerate}
In all the cases $\cB(\Lambda)=\left\{\begin{matrix}
4\\5
\end{matrix},\,\,\begin{matrix}
1
\end{matrix},\,\,\begin{matrix}
2\\3
\end{matrix}\right\}$, it is easy to verify that all of these algebras have the finitistic dimension one.
\end{example}

\begin{remark}\label{remark1}  From definitions, we can deduce that $\del\Lambda\leq \varphi\dim\Lambda$. Before proceeding our analysis, we give definition of periodic module: $M$ is called periodic if there exists $t$ such that $\Omega^t(M)=M$. In details, there are two types of simple modules:
\begin{itemize}
\item Projective dimension of $S$ is finite, so $\pdim S\leq \varphi\dim\Lambda$.
\item Projective dimension of $S$ is unbounded, then $\Omega^{\varphi\dim\Lambda}(S)$ has to be periodic module, so there exists another periodic module $M$ such that :
\begin{align*}
\Omega^{\varphi\dim\Lambda}(S)\cong\Omega(M)
\end{align*}
\end{itemize}
\end{remark}

\begin{proposition}\label{prop2}
Let $\Lambda$ be cyclic Nakayama algebra with $\varphi\dim\Lambda=2$. Then:
\begin{center}
\begin{itemize}
\item $\findim\Lambda=1 \iff \del\Lambda=1$
\item $\findim\Lambda=2\iff \del\Lambda=2$
\end{itemize}
\end{center}

\end{proposition}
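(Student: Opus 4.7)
The plan is to combine three known bounds: Gelinas' inequality $\findim\Lambda^{op}\le\del\Lambda$, the upper bound $\del\Lambda\le\varphi\dim\Lambda=2$ from remark \ref{remark1}, and the identity $\findim\Lambda=\findim\Lambda^{op}$ valid once this common value lies in $\{1,2\}$, as stated in the remark following Proposition \ref{findim1}. Together with the lower bound $\findim\Lambda\ge\varphi\dim\Lambda-1=1$ from remark \ref{remarkfacts}(\ref{listdifference}), these yield
\[
1\le\findim\Lambda=\findim\Lambda^{op}\le\del\Lambda\le 2,
\]
so both $\findim\Lambda$ and $\del\Lambda$ take values in $\{1,2\}$. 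The implications $\findim\Lambda=2\Rightarrow\del\Lambda=2$ and $\del\Lambda=1\Rightarrow\findim\Lambda=1$ are then immediate, and the proposition reduces to the single nontrivial claim $\findim\Lambda=1\Rightarrow\del\Lambda\le 1$ (whose contrapositive supplies $\del\Lambda=2\Rightarrow\findim\Lambda=2$).

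To prove this key implication, I would assume $\findim\Lambda=1$ and invoke Proposition \ref{findim1}, so that the index sets $\{k_1,k_3,\dots,k_{2r-1}\}$ and $\{k_2,k_4,\dots,k_{2r}\}$ coincide. I would then verify $\del S_i\le 1$ for every simple $S_i$, splitting on whether $i$ lies in this common index set. First, if $i\in\{k_{2j}\}$, the description of socles in Subsection \ref{projectives} identifies $S_i$ as the socle of some indecomposable projective, giving $S_i\hookrightarrow{}_\Lambda\Lambda$ and hence $\del S_i=0$. Second, if $i\notin\{k_{2j-1}\}$, I would use the defining relations \ref{relations} together with the cyclic ordering $\dots<k_1<k_3<\dots<k_{2r-1}<k_1<\dots$ to compute $|P_i|=k_{2l}-i+1$, where $k_{2l-1}$ denotes the smallest relation-starting index with $k_{2l-1}\ge i$ cyclically. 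Since $i$ is not itself a starting index, $P_i$ and $P_{i+1}$ are governed by the same relation and share the socle $S_{k_{2l}}$, forcing $|P_{i+1}|=|P_i|-1$; a comparison of tops, lengths and socles then identifies $\Omega(S_i)=\mathrm{rad}(P_i)\cong P_{i+1}$. Hence $\Omega(S_i)$ is projective, $\pdim S_i=1$, and $\del S_i\le 1$.

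Taking the maximum over simples yields $\del\Lambda\le 1$, and combined with the lower bound $\findim\Lambda^{op}=1\le\del\Lambda$ this forces $\del\Lambda=1$. The main obstacle I anticipate is the length identity in the second case: one must argue cleanly that the governing relation can change between consecutive vertices only when the lower vertex is itself a relation-starting index, so that outside the common index set $\{k_{2j-1}\}=\{k_{2j}\}$ the first syzygy $\Omega(S_i)$ is genuinely the next indecomposable projective. Once this routine but delicate length computation is isolated, the proposition assembles quickly from the bounds already recalled above.
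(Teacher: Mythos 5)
Your proof is correct, and it takes a route genuinely different from the paper's. The paper proves both directions of $\findim\Lambda=1\iff\del\Lambda=1$ directly: for the forward direction it splits the simple modules by periodicity ($\pdim S=1$; $S$ periodic; $\Omega(S)$ periodic, cf.\ Remark~\ref{remark1}), and for the reverse direction it re-derives the index-set equality $\{k_{2j-1}\}=\{k_{2j}\}$ from $\del\Lambda=1$. You instead set up the chain $1\le\findim\Lambda=\findim\Lambda^{op}\le\del\Lambda\le 2$ from Gelinas' inequality, the op-invariance remark after Proposition~\ref{findim1}, and the bounds in Remarks~\ref{remarkfacts}(\ref{listdifference}) and \ref{remark1}; this immediately dispatches three of the four implications and reduces everything to $\findim\Lambda=1\Rightarrow\del\Lambda\le 1$. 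For that, your case split on the index $i$ (socle index versus not, rather than periodicity) is also new relative to the paper: the verification that $i\notin\{k_{2j-1}\}$ implies $\mathrm{rad}(P_i)\cong P_{i+1}$, hence $\pdim S_i=1$, is exactly the length computation you anticipate and it does go through, since the socle of $P_j$ only changes as $j$ crosses a relation-starting index $k_{2l-1}$. Your approach is somewhat more computational and self-contained, whereas the paper's periodicity argument leans more on the structural facts of Remark~\ref{remark1}.

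One small imprecision to flag: in the case $i\in\{k_{2j}\}$ you justify $\del S_i=0$ by writing $S_i\hookrightarrow{}_\Lambda\Lambda$, but a submodule inclusion is not what the definition \eqref{defdel1} asks for; you need $S_i$ to be a direct summand of ${}_\Lambda\Lambda\oplus\Omega(M')$. This is still true here, because for a cyclic Nakayama algebra every indecomposable projective $P$ has length at least $2$, so $\mathrm{soc}\,P=\mathrm{rad}^{|P|-1}P=\Omega\bigl(P/\mathrm{soc}\,P\bigr)$, exhibiting $S_i$ as a first syzygy. The conclusion $\del S_i=0$ is therefore correct, but the justification should pass through ``$S_i$ is a syzygy'' rather than ``$S_i$ is a submodule of $\Lambda$.''
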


\begin{proof}
It is enough to show the first item, since both of $\del\Lambda$ and $\findim\Lambda$ bounded by $2$ which follows from remarks \ref{remarkfacts}(\ref{listdifference}) and \ref{remark1}. First we show that  the implication $\findim\Lambda=1\implies\del\Lambda=1$. Assume that $\findim\Lambda=1$. By the remark \ref{remark1}, possibilities are:
\begin{enumerate}[label=\roman*)]
\item Projective dimension of $S$ is one, therefore $\del S=1$
\item $S$ itself can be a periodic module which makes $\del S=0$.
\item $S$ is not periodic but $\Omega(S)$ is periodic module i.e. non-periodic tops of a minimal projectives \ref{minproj}. We conclude $\del S=1$.
\end{enumerate}
As a result, maximum of $\del S$ is one. 

Now, we consider the other direction. Since $\del\Lambda=\max_S\{\del S\}$, we have two cases:
\begin{enumerate}[label=\roman*)]
\item $\pdim S=1$, 
\item $\Omega(S)$ is periodic module. 
\end{enumerate}
If projective dimension of a simple module is one, it cannot be top of minimal projective. The second item forces that the first syzygies of tops of minimal projectives have $\cB(\Lambda)$ filtration. Therefore they are the indices of simple modules of the set $S'(\Lambda)$. On the other hand, the indices of the tops of the first syzygies are $\{k_1+1,k_3+1,\ldots,k_{2r-1}+1\}$ in this case. We get:
$\{k_2+1,k_4+1,\ldots,k_{2r}+1\}=\{k_1+1,k_3+1,\ldots,k_{2r-1}+1\}$ which is equivalent to item \ref{item2} in proposition \ref{findim1}. This finishes the proof.
\end{proof}

In the proof, we did not need to analyze the case $\del\Lambda=2$. Actually, it is equivalent to existence of a simple module $S$ which is not a socle of any projective module and in particular $\Omega^2(S)$ is periodic but $\Omega(S)$ is not. \\

It is clear that if global dimension is finite, $\del\Lambda=\findim\Lambda$. Hence we consider infinite global dimensional case. Moreover, we can exclude the equalities of the left and right finitistic dimensions, since we proved them in \cite{sen2019}. Now we can use induction to prove the remaining cases of the main result:
\begin{proof}[Proof of Theorem \ref{thm}] 
Let $\Lambda$ be cyclic (not selfinjective) Nakayama algebra of infinite projective dimension. Then there exists $d$ (see remark \ref{remarkfacts}\ref{listreduction2}) such that:
\begin{align}
1\leq \findim\bm{\varepsilon}^d(\Lambda)\leq 2
\end{align}
Because ${\bm\varepsilon}^{d}(\Lambda)$ is cyclic Nakayama algebra (see remark \ref{remarkfacts} \ref{listnakayama}), we can use proposition \ref{prop2} to get:
\begin{align}
1\leq \findim\bm{\varepsilon}^d(\Lambda)=\del \bm{\varepsilon}^d(\Lambda) \leq 2
\end{align}
Now, by proposition \ref{prop1}, we obtain the desired equality $\findim\Lambda=\del \Lambda$ because:
\begin{align}
1+2d\leq \findim\Lambda=\del \Lambda \leq 2+2d.
\end{align}
\end{proof}

\bibliographystyle{alpha}

\begin{thebibliography}{99}
\bibitem[Gel20]{gel}
Vincent G\'elinas.
\newblock The depth, the delooping level and the finitistic dimension.
\newblock{\em  arXiv preprint arXiv:2004.04828}, 2020.

\bibitem[IT05]{it}
Kiyoshi Igusa and Gordana Todorov. 
\newblock On the finitistic global dimension conjecture for artin algebras.
\newblock{\em Representations of algebras and related topics, Fields Inst. Commun., Amer. Math. Soc.} 2005.


\bibitem[Ringel20]{rin2020del}
Claus Ringel.
\newblock The finitistic dimension of a Nakayama algebra.
\newblock {\em arXiv preprint 2008.10044}, 2020.

\bibitem[Sen18]{sen2018}
Emre Sen.
\newblock The $\varphi$-dimension of cyclic Nakayama algebras.
\newblock {\em arXiv preprint arXiv:1806.01449}, 2018.

\bibitem[Sen19]{sen2019}
Emre Sen.
\newblock Syzygy filtrations of cyclic Nakayama algebras.
\newblock {\em arXiv preprint arXiv:1903.04645}, 2019.




\end{thebibliography}

 \end{document}